\documentclass[a4paper,12pt]{article}

\usepackage{amssymb,amsmath,amsthm,mathrsfs,latexsym,times}

\usepackage[authoryear,round]{natbib}

\usepackage[pdftex]{hyperref}
\hypersetup{plainpages=True, pdfstartview=FitV,
	colorlinks=true,linkcolor=blue,citecolor=blue}


\newtheorem{thm}{Theorem}[section]
\newtheorem{cor}[thm]{Corollary}
\newtheorem{lem}[thm]{Lemma}

\theoremstyle{remark}
\newtheorem{rem}[thm]{Remark}
\def\Po{{\mathscr{P}}}

\def\dtv{d_{T\!V}}

	\title{Poisson approximation in \(\chi^2\) distance by Chen-Stein approach}
\author{Vytas Zacharovas\\
	Institute of Computer Science\\
	Vilnius University\\
	Naugarduko 24, Vilnius, Lithuania\\
	E-mail: vytas.zacharovas@mif.vu.lt}

\begin{document}
\maketitle
\begin{abstract}
	\noindent 
	The main purpose of the paper is to investigate the possibility of  applying Chen-Stein approach to estimate the \(\chi^2\) distance between Poisson distribution and a sum of independent indicators. Earlier results concerning \(\chi^2\) distance between above mentioned distributions either used analytical approach heavily based on the analysis of the generating functions or on rather lengthy and complicated elementary calculations. Applying Chen-Stein approach we succeed in providing a very quick proof of upper bounds for \(\chi^2\) distance that are of comparable strength to the  earlier estimates obtained by other approaches.
	
	\noindent\emph{Key words}: Poisson apprximation, Charlier-Parseval identity, Charlier polynomials, \(\chi^2\) metic, Chen-Stein approach.
\end{abstract}
\section{Introduction}
In what follows we will denote as \(I_1,I_2,\ldots,I_n\) a series of independent random indicators where indicator \(I_j\) takes values \(1\) and \(0\)  with probabilites \(p_j\) and \(1-p_j\) correspondingly. We will also dentote as \(S_n\) the sum
\[
S_n=I_1+I_2+\cdots+I_n
\]
of random independent indicators \(I_n\). The expectation \(\lambda\) of this random variable can be easily computed
\[
\lambda=p_1+p_2+\cdots+p_n.
\]
Later we will also use notation \(\lambda_2\) for the sum of squares
\[
\lambda_2=p_1^2+p_2^2+\cdots+p_n^2
\]
and cubes
\[
\lambda_3=p_1^3+p_2^3+\cdots+p_n^3.
\]
A lot of research has been done evaluating the total variation distance defined as
\[
\dtv(\mathscr{L}(S_n),\Po(\lambda)) := \frac12
\sum_{j\ge0} \left|\mathbb{P}(S_n=j) - e^{-\lambda}
\frac{\lambda^j}{j!}\right|
\]
between the distribution \(\mathscr{L}(S_n)\) of \(S_n\) and Poisson distribution \(\Po(\lambda)\) with parameter \(\lambda\) equal to the mean value of \(S_n\). One of the best known results in this direction is the Barbour-Hall inequality 
\begin{equation}
	\label{barbour_hall_inequality}
\dtv(\mathscr{L}(S_n),\Po(\lambda))\leqslant (1-e^{-\lambda}) \frac{\lambda_2}{\lambda}
\end{equation}
proved in \cite{barbour_hall_1984}. We refer reader interested in a more complete account of history of this direction of research to the book \cite{barbour_holst_janson_1992} or our paper \cite{zacharovas_hwang_2010}.

The research into properties of the \(\chi^2\) distance between \(\mathscr{L}(S_n)\) and \(\Po(\lambda)\) defined as
\[
d_{\chi^2}(\mathscr{L}(S_n),\Po(\lambda))
:=\sum_{m\ge0}\left|\frac{\mathbb{P}(S_n=m)}
{e^{-\lambda}\frac{\lambda^m}{m!}}-1\right|^2
e^{-\lambda}\frac{\lambda^m}{m!}
\]
has a more recent history starting with \cite{borisov_vorozheikin_2008} who using  an elementary probabilistic approach proved  that
\[
\frac{d_{\chi^2}(\mathscr{L}(S_n),\Po(\lambda))}{\left(\frac{\lambda_2}{\lambda}\right)^2}\to \frac{1}{2}
\]
if \(\lambda^6\lambda_2\to 0\). This estimate was later improved by \cite{zacharovas_hwang_2010} who showed that 
\[
d_{\chi^2}(\mathscr{L}(S_n),\Po(\lambda))
=\left(\frac{1}{\sqrt{1-\left(\frac{\lambda_2}{\lambda}\right)^2}}-1\right)
\left(1+O\left(\frac{\lambda_3}{\lambda_2
	\sqrt{\lambda}\left(1-\frac{\lambda_2}{\lambda}\right)^5}\right)\right).
\]
whenever \(\frac{\lambda_2}{\lambda}<1\) and the constant in the symbol \(O(\ldots)\) is absolute and can be made explicit if needed.  This estimate as a special case implies that if \(\lambda\to \infty\) and \(\frac{\lambda_2}{\lambda}\leqslant c<1\) for some fixed constant \(c\)  then
\[
d_{\chi^2}(\mathscr{L}(S_n),\Po(\lambda))
=\left(\frac{1}{\sqrt{1-\left(\frac{\lambda_2}{\lambda}\right)^2}}-1\right)\bigl(1+o(1)\bigr)
\]
A series of upper bounds for \(\chi^2\) distance has been obtained in the same paper, as an example of which we present the following inequality
\begin{equation}
	\label{z_hw_ineq}
d_{\chi^2}(\mathscr{L}(S_n),\Po(\lambda))
\leqslant 2(\sqrt{e}-1)^2\frac{\left(\frac{\lambda_2}{\lambda}\right)^2}{\left(1-\frac{\lambda_2}{\lambda}\right)^{3}}.
\end{equation}

The main tool of work of the above mentioned paper was the following integral expression  for the \(\chi^2\) distance
\begin{equation}
	\label{Charlier-Parseval_integral_form}
	\begin{split}
&d_{\chi^2}(\mathscr{L}(S_n),\Po(\lambda))
\\
&\quad=\frac{ 1}{2\pi}\int_{0}^{\infty}\int_{-\pi}^\pi
\left|\prod_{1\le j\le n}(1+p_j\sqrt{r/\lambda}e^{it})
e^{-p_j\sqrt{r/\lambda}e^{it}}-1\right|^2\,dt\,e^{-r}\,dr
	\end{split}
\end{equation}
followed by the appropriate  analysis of the double integral in the above expression.

Recently \cite{bobkov_chistyakov_goetze_2019_1} and \cite{bobkov_chistyakov_goetze_2019_2} analysed asymptotic behavior of \(\chi^2\) distance in a more straightforward manner by obtaining first sufficiently good local estimates of probabilities \(\mathbb{P}(S_n=m)\) for \(m\) belonging to various regions and then summing them up in the sum in the definition of \(\chi^2\) distance. The main tool used for obtaining such estimates was the usual expression of \(\mathbb{P}(S_n=m)\) as a classical integral of characteristic function of \(S_n\) and then applying saddle point method for the analysis of the resulting integral. They showed that there exist such constants \(C_1\) and \(C_2\) that
\[
C_1 \left(\frac{\lambda_2}{\lambda}\right)^2\sqrt{\frac{\max\{1,\lambda\}}{\max\{1,\lambda-\lambda_2\}}}\leqslant d_{\chi^2}(\mathscr{L}(S_n),\Po(\lambda))\leqslant C_2 \left(\frac{\lambda_2}{\lambda}\right)^2\sqrt{\frac{\max\{1,\lambda\}}{\max\{1,\lambda-\lambda_2\}}}
\]
Unfortunately due to elementary nature of their approach consisting of multiple steps of evaluations that make keeping track of the constants involved very difficult, their estimates of constants involved are very crude. For example they show that the above inequality holds if \(C_0=10^{-8}\) and \(C_2=5.6\cdot 10^7\) 
Applying their approach the authors provided another proof of the  inequality (\ref{z_hw_ineq}) with constant \(2(\sqrt{e}-1)^2\)   replaced by a significantly larger value \(7\cdot 10^6\). 

All of the  described approaches have a significant drawback of being difficult or impossible to apply to sums of indicators consisting of dependent variables. Stein's approach has a big advantage of being able to be relatively easily extendable to the sums of dependent indicators. The main idea of this approach when applied to the analysis of total variation distance is as follows. First one can show that the total variation distance between \(S_n\) and \(\Po(\lambda)\) can be represented as
\begin{equation}
	\label{Chen-Stein_identity}
\dtv(\mathscr{L}(S_n),\Po(\lambda))=\mathbb{E}\bigl(\lambda g(S_n+1)-S_ng(S_n)\bigr)
\end{equation}
where \(g(x)\) is a solution of a certain recurrence relation that is shown to satisfy the property
\begin{equation}
	\label{Chen_Stein_function_diff_ineq}
|g(x+1)-g(x)|\leqslant \frac{1-e^{-\lambda}}{\lambda}
\end{equation}
for all \(x\).
Since it can be easily shown that the right side of  (\ref{Chen-Stein_identity}) becomes zero if we replace \(S_n\) by Poisson random variable  \(W\) with mean \(\lambda\) for any function \(f(x)\) in place of \(g(x)\)
\[
\mathbb{E}\bigl(\lambda f(W+1)-Wf(W)\bigr)=0
\]
so intuition is that if \(S_n\) is close to \(W\) in distribution then the right side of  (\ref{Chen-Stein_identity}) will be close to zero.

 Further in the case of independent random indicators the right hand side of the Stein-Chen expression for the total variation distance can be simplified to
\[
\dtv(\mathscr{L}(S_n),\Po(\lambda))=\sum_{j=1}^{n}p_j^2\mathbb{E}\bigl(g(S_{n,j}+2)-g(S_{n,j}+1)\bigr)
\]
where \(S_{n,j}=S_n-I_j\). Where application of the estimate for the inequality for \(|g(x+1)-g(x)|\) immediately leads to Barbour-Hall inequality (\ref{barbour_hall_inequality}).

Our application of the Stein-Chen metod idea starts with Charlier-Parseval identity 
\begin{equation}
	\label{Charlier_Parseval_for_chi}
d_{\chi^2}(\mathscr{L}(S_n),\Po(\lambda))=
\sum_{k\ge1}\frac{\lambda^k}{k!}
\bigl|\mathbb{E}C_k(\lambda,S_n)\bigr|^2
\end{equation}
where \(C_k(\lambda,m)\) are Charlier polynomials of variable \(m\). These polynomials can be shown to satisfy the properties
\begin{equation}
	\label{recurrence_for_charlier}
	C_{k+1}(\lambda,m)=\frac{m}{\lambda}C_k(\lambda,m-1)
	-C_k(\lambda,m),
\end{equation}
\begin{equation}
	\label{charlier_difference}
	C_k(\lambda,m+1)-C_k(\lambda,m)=\frac{k}{\lambda}C_{k-1}(\lambda,m)
\end{equation}
which together with initial condition
\[
C_0(\lambda,m)\equiv 1
\] can be used for recurrent calculation of Charlier polynomials. For example setting in (\ref{recurrence_for_charlier}) parameter \(k\) equal to zero we obtain
\[
C_1(\lambda,m)=\frac{m}{\lambda}-1.
\]
The first of these properties when applied to the the expectations present in the Charlier-Parseval  identity yield the expression
\[
\mathbb{E}C_k(\lambda,S_n)=-\frac{1}{\lambda}\mathbb{E}\bigl(
\lambda C_{k-1}(\lambda,S_n)-S_nC_{k-1}(\lambda,S_n-1)\bigr).
\]
The right side of the above identity is exactly the right hand side of Chen-Stein identity (\ref{Chen-Stein_identity}) multiplied by factor \(-1/\lambda\) with \(C_{k-1}(\lambda,x-1)\) standing
in place of \(g(x)\). The second identity (\ref{charlier_difference}) satisfied by Charlier polynomials plays the same role as the inequality (\ref{Chen_Stein_function_diff_ineq}) in the original Stein's approach. These considerations lead us to the inequality of the following theorem which is of independent interest.
\begin{thm}
	\label{inequality_for_independent_case}
	The following inequality holds for \(\chi^2\) distance between generalized binomial distribution and Poisson distribution
	\[
	\chi^2(S_{n},\mathcal{P}(\lambda))
	<\frac{\lambda_2}{\lambda^2}
	\sum_{j=1}^{n}p_j^2\chi^2(S_n-I_j,\mathcal{P}(\lambda))+\frac{1}{2}\left(\frac{\lambda_2}{\lambda}\right)^2.
	\]	
\end{thm}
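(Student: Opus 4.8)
The plan is to start from the Charlier--Parseval identity \eqref{Charlier_Parseval_for_chi}, which expresses $d_{\chi^2}(\mathscr L(S_n),\Po(\lambda))$ as $\sum_{k\ge1}\frac{\lambda^k}{k!}\bigl|\mathbb{E}C_k(\lambda,S_n)\bigr|^2$, and to rewrite each coefficient $\mathbb{E}C_k(\lambda,S_n)$ using the recurrence \eqref{recurrence_for_charlier}. As the excerpt already observes, this gives
\[
\mathbb{E}C_k(\lambda,S_n)=-\frac1\lambda\,\mathbb{E}\bigl(\lambda C_{k-1}(\lambda,S_n)-S_n C_{k-1}(\lambda,S_n-1)\bigr),
\]
so the right-hand side is (up to the factor $-1/\lambda$) a Chen--Stein functional with $g(x)=C_{k-1}(\lambda,x-1)$. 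The first step is therefore to apply the standard independent-indicator simplification of this functional: conditioning on which indicators are $1$ and telescoping, $\mathbb{E}\bigl(\lambda g(S_n+1)-S_n g(S_n)\bigr)=\sum_{j=1}^n p_j^2\,\mathbb{E}\bigl(g(S_{n,j}+2)-g(S_{n,j}+1)\bigr)$ with $S_{n,j}=S_n-I_j$. Combined with the difference identity \eqref{charlier_difference}, which gives $g(x+2)-g(x+1)=C_{k-1}(\lambda,x+1)-C_{k-1}(\lambda,x)=\frac{k-1}{\lambda}C_{k-2}(\lambda,x)$, this yields the clean formula
\[
\mathbb{E}C_k(\lambda,S_n)=-\frac{k-1}{\lambda^2}\sum_{j=1}^n p_j^2\,\mathbb{E}C_{k-2}(\lambda,S_{n,j}),\qquad k\ge2,
\]
while for $k=1$ one has $\mathbb{E}C_1(\lambda,S_n)=0$ directly from $C_1(\lambda,m)=m/\lambda-1$ and $\mathbb{E}S_n=\lambda$.

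The second step is to substitute this into the Charlier--Parseval sum. Writing $|\mathbb{E}C_k(\lambda,S_n)|^2=\frac{(k-1)^2}{\lambda^4}\bigl|\sum_j p_j^2\,\mathbb{E}C_{k-2}(\lambda,S_{n,j})\bigr|^2$ and expanding the square of the sum over $j$ as a double sum over $(i,j)$, I expect to apply Cauchy--Schwarz (or the inequality $ab\le\frac12(a^2+b^2)$ together with the convexity/monotonicity of the $\chi^2$ Charlier series) to decouple the $i$ and $j$ indices, so that after re-summing over $k$ each diagonal-type term reassembles into $\chi^2(S_{n,j},\Po(\lambda))$ via Charlier--Parseval applied to $S_{n,j}$ — but note the index shift $k\mapsto k-2$ and the extra factor $(k-1)^2\lambda^k/k!$ versus $\lambda^{k-2}/(k-2)!$. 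The ratio of these, $(k-1)^2\lambda^2/(k(k-1))=\lambda^2(k-1)/k<\lambda^2$, is uniformly bounded by $\lambda^2$, which is exactly where the prefactor $\lambda_2/\lambda^2\cdot\sum_j p_j^2=(\lambda_2/\lambda^2)\lambda_2$ in the first term of the theorem comes from after one also accounts for $\sum_i p_i^2=\lambda_2$.

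The third step handles the lowest-order term separately, since the recursion bottoms out at $C_0\equiv1$: the $k=2$ contribution is $\frac{\lambda^2}{2}\bigl|\mathbb{E}C_2(\lambda,S_n)\bigr|^2$ with $\mathbb{E}C_2(\lambda,S_n)=-\frac1{\lambda^2}\sum_j p_j^2\,\mathbb{E}C_0(\lambda,S_{n,j})=-\frac{\lambda_2}{\lambda^2}$, contributing exactly $\frac{\lambda^2}{2}\cdot\frac{\lambda_2^2}{\lambda^4}=\frac12\bigl(\lambda_2/\lambda\bigr)^2$, which is the second, additive term in the theorem. The strict inequality then comes from the fact that the Cauchy--Schwarz/AM--GM step and the bound $(k-1)/k<1$ are not tight (and the $k\ge3$ terms contribute a strictly smaller multiple), so some slack is lost; I should be slightly careful that this slack is genuinely strict and not merely $\le$, e.g. by checking that $d_{\chi^2}(S_{n,j},\Po(\lambda))$ is not identically zero in the relevant range, or by keeping one explicit positive error term.

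\textbf{Main obstacle.} The delicate point is the decoupling of the double sum $\sum_{i,j}p_i^2 p_j^2\,\mathbb{E}C_{k-2}(\lambda,S_{n,i})\,\overline{\mathbb{E}C_{k-2}(\lambda,S_{n,j})}$: one wants to bound it by a single sum $\sum_j p_j^2\cdot p_j^2|\mathbb{E}C_{k-2}(\lambda,S_{n,j})|^2$ times a combinatorial factor, but $S_{n,i}$ and $S_{n,j}$ are different random variables, so this is not a literal Cauchy--Schwarz in a fixed inner-product space. The clean way around this is to use $|\sum_j a_j b_j|^2\le(\sum_j a_j)(\sum_j a_j|b_j|^2)$ with $a_j=p_j^2$ (a weighted Cauchy--Schwarz, valid since $a_j\ge0$), giving $\bigl|\sum_j p_j^2\mathbb{E}C_{k-2}(\lambda,S_{n,j})\bigr|^2\le\lambda_2\sum_j p_j^2\bigl|\mathbb{E}C_{k-2}(\lambda,S_{n,j})\bigr|^2$; after this the $k$-sum separates cleanly and the Charlier--Parseval identity for each $S_{n,j}$ can be invoked term by term. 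Getting the constant in the second term to be exactly $1/2$ (rather than something larger) requires treating $k=2$ outside this Cauchy--Schwarz bound, since for $k=2$ the factor $\mathbb{E}C_0(\lambda,S_{n,j})=1$ makes the double sum a perfect square $\lambda_2^2$ with no loss.
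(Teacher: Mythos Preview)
Your plan is correct and matches the paper's proof essentially step for step: derive $\mathbb{E}C_k(\lambda,S_n)=-\frac{k-1}{\lambda^2}\sum_j p_j^2\,\mathbb{E}C_{k-2}(\lambda,S_{n,j})$ via the Chen--Stein simplification and \eqref{charlier_difference}, substitute into Charlier--Parseval, apply the weighted Cauchy--Schwarz $\bigl|\sum_j p_j^2 b_j\bigr|^2\le\lambda_2\sum_j p_j^2|b_j|^2$, and use the ratio bound $(k-1)/k<1$. The only cosmetic difference is that the paper applies Cauchy--Schwarz uniformly (it is an equality at the bottom term anyway) and then keeps the exact factor $(k+1)/(k+2)=1/2$ at the reindexed $k=0$ term rather than isolating $k=2$ beforehand; the strict inequality comes, as you note, from $(k+1)/(k+2)<1$ for $k\ge1$, which is genuinely strict since $\mathbb{E}C_1(\lambda,S_{n,j})=-p_j/\lambda\ne0$.
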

This inequality provides a bound for the \(\chi^2\) distance of \(S_n\) to Poisson distribution in terms of a  weighted  average of distances of smaller sums \(S_n-I_j\) to Poisson distribution. Iterating this inequality we obtain the following estimate.
\begin{cor}
	\label{cor_upper_bound_for_chi} Let us denote
	\(
	\Theta:=\max_{1\leqslant j\leqslant n} p_j.
	\)
If
	\[
	\Theta^2e^{\Theta}<1
	\]
	then holds inequality
	\[
	\chi^2(S_{n},\mathcal{P}(\lambda))
	\leqslant\frac{1}{2}\left(\frac{\lambda_2}{\lambda}\right)^2
	\Theta^2\frac{2e^\Theta-1}{1- \Theta^2e^\Theta}e^\Theta+
	\frac{\lambda_2\lambda_3}{\lambda^2}e^\Theta
	+\frac{1}{2}\left(\frac{\lambda_2}{\lambda}\right)^2.
	\]
Note that condition \(\Theta^2e^\Theta<1\) is satisfied if \(\Theta\leqslant 0.7\).

\end{cor}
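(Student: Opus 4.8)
The plan is to iterate the inequality of Theorem~\ref{inequality_for_independent_case}. The obstacle to clear first is that its right–hand side compares the reduced sums $S_{n,j}=S_n-I_j$ with $\Po(\lambda)$, where $\lambda$ is the mean of the \emph{full} sum, rather than with $\Po(\lambda-p_j)$, the Poisson law of matching mean; so one cannot substitute the theorem back into itself directly. I would bridge this by a parameter–shift lemma: for any $\{0,1,2,\dots\}$-valued $X$ and any $0\le\nu\le\mu$,
\[
1+d_{\chi^2}(\mathscr{L}(X),\Po(\mu))\le e^{\mu-\nu}\bigl(1+d_{\chi^2}(\mathscr{L}(X),\Po(\nu))\bigr).
\]
This is immediate from the identity $1+d_{\chi^2}(\mathscr{L}(X),\Po(\mu))=e^{\mu}\sum_{m\ge0}\mathbb{P}(X=m)^2\,m!\,\mu^{-m}$: write $e^{\mu}=e^{\mu-\nu}e^{\nu}$ and bound $\mu^{-m}\le\nu^{-m}$ termwise (all sums are finite since every $X$ occurring below is a finite sum of indicators). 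This lemma is the source of the factors $e^{\Theta}$ in the Corollary.

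Next I would run the recursion over subsets. For $A\subseteq\{1,\dots,n\}$ put $S_A=\sum_{j\in A}I_j$, $\lambda_A=\sum_{j\in A}p_j$, $\lambda_{2,A}=\sum_{j\in A}p_j^2$, and set $G:=\max_{A}\bigl(1+d_{\chi^2}(\mathscr{L}(S_A),\Po(\lambda_A))\bigr)$, a finite maximum over the $2^n$ subsets. For each nonempty $A$, apply Theorem~\ref{inequality_for_independent_case} to $S_A$, then the lemma with $\mu=\lambda_A$ and $\nu=\lambda_A-p_j$ (cost $e^{p_j}\le e^{\Theta}$), then $d_{\chi^2}(\mathscr{L}(S_{A\setminus\{j\}}),\Po(\lambda_A-p_j))\le G-1$, and finally the elementary bound $\lambda_{2,A}/\lambda_A=\sum_{j\in A}p_j\,(p_j/\lambda_A)\le\Theta$. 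Collecting terms gives
\[
d_{\chi^2}(\mathscr{L}(S_A),\Po(\lambda_A))<\left(\frac{\lambda_{2,A}}{\lambda_A}\right)^2\left(e^{\Theta}G-\tfrac12\right)\le\Theta^2\left(e^{\Theta}G-\tfrac12\right),
\]
while the left side vanishes for $A=\varnothing$. Taking the maximum over $A$ turns this into the self-bounding inequality $G\le 1-\tfrac12\Theta^2+\Theta^2e^{\Theta}G$, which, because $\Theta^2e^{\Theta}<1$, rearranges to $G\le(1-\tfrac12\Theta^2)/(1-\Theta^2e^{\Theta})$.

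Finally I would apply Theorem~\ref{inequality_for_independent_case} once more, this time to $S_n$ itself and without discarding the genuine factor $\lambda_2/\lambda$. For each $j$ the lemma gives $d_{\chi^2}(\mathscr{L}(S_{n,j}),\Po(\lambda))\le e^{p_j}G-1$, and writing $e^{p_j}=1+(e^{p_j}-1)$ with $e^{p_j}-1\le p_je^{p_j}\le p_je^{\Theta}$ splits the weighted sum as
\[
\sum_{j=1}^{n}p_j^2\,d_{\chi^2}(\mathscr{L}(S_{n,j}),\Po(\lambda))\le (G-1)\,\lambda_2+e^{\Theta}G\,\lambda_3 .
\]
Multiplying by $\lambda_2/\lambda^2$, adding the residual $\tfrac12(\lambda_2/\lambda)^2$ coming from Theorem~\ref{inequality_for_independent_case}, and inserting the bound for $G$ produces an estimate of the announced shape: a $(\lambda_2/\lambda)^2$-term whose coefficient is $\tfrac12$ plus a geometric-type remainder with denominator $1-\Theta^2e^{\Theta}$, an $\frac{\lambda_2\lambda_3}{\lambda^2}$-term with an $e^{\Theta}$-type coefficient, and the bare $\tfrac12(\lambda_2/\lambda)^2$. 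The remark that $\Theta^2e^{\Theta}<1$ whenever $\Theta\le0.7$ is a one-line numerical check.

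The algebra is routine; the delicate point is arranging the estimates so that the constants emerge \emph{exactly} as stated rather than merely in the right form. One must keep the crude bound $e^{p_j}\le e^{\Theta}$ confined to the subset–recursion and use the sharper $e^{p_j}\le 1+p_je^{\Theta}$ only in the single final step --- feeding the sharper bound into the recursion would replace the hypothesis $\Theta^2e^{\Theta}<1$ by the stronger $\Theta^2(1+\Theta e^{\Theta})<1$, which already fails at $\Theta=0.7$. Recovering simultaneously the exact $(\lambda_2/\lambda)^2$-coefficient $\tfrac12+\tfrac12\Theta^2e^{\Theta}\,\frac{2e^{\Theta}-1}{1-\Theta^2e^{\Theta}}$ and the clean coefficient $e^{\Theta}$ of the $\lambda_3$-term may force one to unroll the recursion by hand one level further, equivalently to track the correction term $-\tfrac{p_j}{\lambda}\,\mathbb{E}\,C_{k-1}(\lambda,S_{n,j})$ that appears when the Chen-Stein step behind Theorem~\ref{inequality_for_independent_case} is applied to $S_{n,j}$ in place of $S_n$, instead of passing through the global maximum $G$. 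I would first attempt the $G$-route and refine only if the constants fail to close.
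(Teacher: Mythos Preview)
Your approach is essentially the paper's own: the parameter-shift lemma is exactly the estimate the paper isolates as Corollary~\ref{cor_for_chi}, and your self-bounding inequality for $G$ is a clean repackaging of the explicit $(n{-}1)$-fold iteration the paper performs in Theorem~\ref{T_upper_bound_for_chi_crude} (both deliver $G-1\le\tfrac12\Theta^2(2e^\Theta-1)/(1-\Theta^2e^\Theta)$, your route just skipping the separate treatment of the base case via Lemma~\ref{chi_for_n=1}).

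The one place your write-up drifts from the stated constants is the split in the final step. Writing $e^{p_j}G-1=(G-1)+(e^{p_j}-1)G$ leaves the $\lambda_3$-term carrying a factor $Ge^\Theta$ rather than $e^\Theta$. The fix is not to unroll further but simply to split the other way:
\[
e^{p_j}G-1=e^{p_j}(G-1)+(e^{p_j}-1),
\]
then bound $e^{p_j}\le e^\Theta$ in the first piece and $e^{p_j}-1\le p_je^\Theta$ in the second. This yields
\[
\frac{\lambda_2}{\lambda^2}\sum_j p_j^2\bigl(e^{p_j}G-1\bigr)\le \Bigl(\frac{\lambda_2}{\lambda}\Bigr)^2 e^\Theta(G-1)+\frac{\lambda_2\lambda_3}{\lambda^2}e^\Theta,
\]
which together with $G-1\le\tfrac12\Theta^2(2e^\Theta-1)/(1-\Theta^2e^\Theta)$ and the residual $\tfrac12(\lambda_2/\lambda)^2$ gives the inequality of the Corollary verbatim---no extra level of recursion or Chen--Stein correction term is needed.
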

Thus our proof of the above results by Stein-Chen's approach thus is purely probabilistic in nature. The only knowledge assumed on the part of the reader outside the realm of probability is the Charlier-Parseval identity (\ref{Charlier_Parseval_for_chi}) and the two properties satisfied by Charlier polynomials (\ref{recurrence_for_charlier}) and (\ref{charlier_difference}). For the sake of making the paper self contained we provide complete proof these facts in  the appendix of this paper. Our derivation of Charlier-Parseval identity is of independent interest as it has as its starting point a certain integral identity between double integrals from which integral form of Charlier-Parseval identity (\ref{Charlier-Parseval_integral_form}) is established we derive from it the definition of Charlier polynomials and their basic properties most notably their orthogonality with respect to Poisson measure. This approach is analogous to the approach used in  \cite{zacharovas_2019} in the context of Krawtchouk polynomials and Krawtchouk-Parseval identity. A reader interested in a more traditional derivation of these facts may consult, for example, Chapter \(2.81\) of \cite{szego_1975}, Chapter \(7.5\) of  \cite{johnson_2004} or introductory section of our previous paper \cite{zacharovas_hwang_2010}.

Though the upper bounds of this paper for \(\chi^2(S_{n},\mathcal{P}(\lambda))\) in the case of idependent indicators \(S_n\) are not  much stronger than those obtained previously the main focus here is development of a framework of applying Stein-Chen approach to the analysis of \(\chi^2\) distance. The Stein-Chen approach applied to the analysis of total variation distance is notorious for being applicable also to the sums of dependent indicators while treating such sums by analytic methods is very difficult unless the characteristic function such a sums allows for an explicit analytic expression. Thus it is reasonable to expect that the framework we develop will also be applicable to some cases of dependent indicators. We hope to explore this direction in subsequent works.
%
%

%

\subsection{Proofs}

 In what follows we will  denote
\[
S_{n,j}=S_n-I_j.
\]

\begin{proof}[Proof of Theorem \ref{inequality_for_independent_case}] The proof will exploit the expression of \(\chi^2\) by means of Charlier-Parseval identity (\ref{Charlier_Parseval_for_chi}). In order to evaluate its right hand side we will evaluate  mean values \(\mathbb{E}C_k(\lambda,S_n)\) occurring there by means of formula (\ref{recurrence_for_charlier}). The resulting expression will be exactly the Chen-Stein operator  for function \(f(x)=C_{k-1}(\lambda,x)\). Which allows us to proceed in a fairly standard way
\[
\begin{split}
\mathbb{E}C_k(\lambda,S_n)&=\mathbb{E}
\left(\frac{S_n}{\lambda}C_{k-1}(\lambda,S_n-1)
-C_{k-1}(\lambda,S_n)\right)
\\
&=\frac{1}{\lambda}\mathbb{E}
\bigl(S_nC_{k-1}(\lambda,S_n-1)
-\lambda C_{k-1}(\lambda,S_n)\bigr)
\\
&=\frac{1}{\lambda}\sum_{j=1}^{n}\mathbb{E}
\bigl(I_jC_{k-1}(\lambda,S_n-1)
-p_j C_{k-1}(\lambda,S_n)\bigr)
\\
&=\frac{1}{\lambda}\sum_{j=1}^{n}\mathbb{E}
\bigl(I_jC_{k-1}(\lambda,S_{n,j})
-p_j C_{k-1}(\lambda,S_n)\bigr).
\end{split}
\]
Since \(S_n-1=S_{n,j}\) whenever \(I_j=1\).
Hence taking into account independence of \(I_j\) and \(S_{n,j}=S_n-I_j\) we proceed
\[
\begin{split}
\mathbb{E}C_k(\lambda,S_n)&=\frac{1}{\lambda}\sum_{j=1}^{n}p_j \mathbb{E}
\bigl(C_{k-1}(\lambda,S_{n,j})
-C_{k-1}(\lambda,S_n)\bigr)
\\
&=\frac{1}{\lambda}\sum_{j=1}^{n}p_j^2 \mathbb{E}
\bigl(C_{k-1}(\lambda,S_{n,j})
-C_{k-1}(\lambda,S_{n,j}+1)\bigr).
\end{split}
\]
Note that the difference of Charlier polynomials in the above sum can be expressed as a single Charlier polynomial by means of formula (\ref{charlier_difference}) which leads to expression
\[
\mathbb{E}C_k(\lambda,S_n)=-\frac{k-1}{\lambda^2}\sum_{j=1}^{n}p_j^2 \mathbb{E}
C_{k-2}(\lambda,S_{n,j})
\]
for all \(k\geqslant 2\).
Since \(\lambda=\mathbb{E}S_n\) we have \(\mathbb{E}C_1(\lambda,S_n)=\mathbb{E}(S_n/\lambda-1)=0\) and therefore the summation on right-hand side of the Charlier-Parseval identity (\ref{Charlier_Parseval_for_chi}) must start from \(k=2\). 
\[
\begin{split}
\chi^2(S_{n},\mathcal{P}(\lambda))&=\sum_{k=2}^\infty\frac{\lambda^k}{k!}
\bigl|\mathbb{E}C_k(\lambda,S_n)\bigr|^2
\\
&=\sum_{k=2}^\infty\frac{\lambda^k}{k!}
\left|\frac{k-1}{\lambda^2}\sum_{j=1}^{n}p_j^2 \mathbb{E}
C_{k-2}(\lambda,S_{n,j})\right|^2
\\
&=\frac{1}{\lambda^2}\sum_{k=0}^\infty\frac{\lambda^k}{k!}\frac{k+1}{k+2}
\left|\sum_{j=1}^{n}p_j^2 \mathbb{E}
C_{k}(\lambda,S_{n,j})\right|^2
\end{split}
\]
applying here Cauchy inequality we obtain
\[
\begin{split}
	\chi^2(S_{n},\mathcal{P}(\lambda))
	\leqslant\frac{\lambda_2}{\lambda^2}\sum_{k=0}^\infty\frac{\lambda^k}{k!}\frac{k+1}{k+2}
	\sum_{j=1}^{n}p_j^2\bigl| \mathbb{E}
	C_{k}(\lambda,S_{n,j})\bigr|^2.
\end{split}
\]
Noting that \((k+1)/(k+2)<1\), exchanging the order of summation and noting that the inner sums on the right hand are equal to \(\chi^2(S_{n,j},\mathcal{P}(\lambda))\) by (\ref{Charlier_Parseval_for_chi}) we arrive at inequality 
\[
\begin{split}
	\chi^2(S_{n},\mathcal{P}(\lambda))
	&\leqslant\frac{\lambda_2}{\lambda^2}\sum_{j=1}^{n}
	p_j^2\sum_{k=0}^\infty \frac{\lambda^k}{k!}\frac{k+1}{k+2}\bigl|\mathbb{E}
	C_{k}(\lambda,S_{n,j})\bigr|^2
	\\
	&<\frac{\lambda_2}{\lambda^2}\sum_{j=1}^{n}
	p_j^2\left(\frac{1}{2}+\sum_{k=1}^\infty \frac{\lambda^k}{k!}\bigl|\mathbb{E}
	C_{k}(\lambda,S_{n,j})\bigr|^2\right)
	\\
	&=\frac{\lambda_2}{\lambda^2}
	\sum_{j=1}^{n}p_j^2\chi^2(S_{n,j},\mathcal{P}(\lambda))+\frac{1}{2}\left(\frac{\lambda_2}{\lambda}\right)^2.
\end{split}
\]

This completes the proof of the inequality of the theorem.
\end{proof}

We have obtained the above inequality assuming that \(\mathbb{E}S_n=\lambda\). However the sum on the right hand side of the
 inequality  of the last theorem contains \(\chi^2(S_{n,j},\mathcal{P}(\lambda))\) where
\(\mathbb{E}S_{n,j}=\lambda-p_j\). Thus in order to iterate the inequality we must evaluate \(\chi^2(S_{n,j},\mathcal{P}(\lambda))\) in terms of \(\chi^2(S_{n,j},\mathcal{P}(\lambda-p_j))\). Such estimate is provided in the following corollary.
\begin{cor}
	\label{cor_for_chi}
\[
\chi^2(S_{n},\mathcal{P}(\lambda))
<\frac{\lambda_2}{\lambda^2}
\sum_{j=1}^{n}p_j^2e^{p_j}\chi^2(S_{n,j},\mathcal{P}(\lambda-p_j))+
\frac{\lambda_2}{\lambda^2}
\sum_{j=1}^{n}p_j^2(e^{p_j}-1)
+\frac{1}{2}\left(\frac{\lambda_2}{\lambda}\right)^2.
\]
\end{cor}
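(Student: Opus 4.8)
The plan is to deduce this corollary directly from Theorem~\ref{inequality_for_independent_case} by upgrading each term \(\chi^2(S_{n,j},\mathcal{P}(\lambda))\) on its right-hand side to one involving \(\chi^2(S_{n,j},\mathcal{P}(\lambda-p_j))\), i.e.\ the \(\chi^2\) distance of \(S_{n,j}\) to the Poisson law whose parameter is the \emph{correct} mean \(\mathbb{E}S_{n,j}=\lambda-p_j\). The only ingredient needed for this is a general comparison estimate: for a fixed nonnegative integer-valued random variable \(X\) and parameters \(0<\mu'<\mu\), one must bound \(\chi^2(X,\mathcal{P}(\mu))\) in terms of \(\chi^2(X,\mathcal{P}(\mu'))\).

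To get such an estimate I would exploit the ``quadratic'' form of the \(\chi^2\) distance obtained from its definition by expanding the square (using \(\sum_m\mathbb{P}(X=m)=1\)):
\[
\chi^2(X,\mathcal{P}(\mu))+1=\sum_{m\ge0}\frac{\mathbb{P}(X=m)^2}{e^{-\mu}\mu^m/m!}=e^{\mu}\sum_{m\ge0}\frac{m!}{\mu^m}\,\mathbb{P}(X=m)^2,
\]
valid for every \(\mu>0\). Since \(\mu>\mu'\) gives \(m!/\mu^m\le m!/(\mu')^m\) for all \(m\ge0\) (with equality at \(m=0\)), the last series does not increase when \(\mu\) is replaced by \(\mu'\); writing \(e^{\mu}=e^{\mu-\mu'}e^{\mu'}\) we obtain
\[
\chi^2(X,\mathcal{P}(\mu))+1\le e^{\mu-\mu'}\bigl(\chi^2(X,\mathcal{P}(\mu'))+1\bigr),
\]
equivalently \(\chi^2(X,\mathcal{P}(\mu))\le e^{\mu-\mu'}\chi^2(X,\mathcal{P}(\mu'))+e^{\mu-\mu'}-1\).

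It then remains to apply this with \(X=S_{n,j}\), \(\mu=\lambda\) and \(\mu'=\lambda-p_j\) — the case \(p_j=0\) is trivial since then \(I_j\equiv0\) and the \(j\)-th term disappears — which yields, for each \(j\),
\[
\chi^2(S_{n,j},\mathcal{P}(\lambda))\le e^{p_j}\chi^2(S_{n,j},\mathcal{P}(\lambda-p_j))+e^{p_j}-1,
\]
and to substitute this into the bound of Theorem~\ref{inequality_for_independent_case}. Grouping the resulting terms into \(\frac{\lambda_2}{\lambda^2}\sum_j p_j^2 e^{p_j}\chi^2(S_{n,j},\mathcal{P}(\lambda-p_j))\), \(\frac{\lambda_2}{\lambda^2}\sum_j p_j^2(e^{p_j}-1)\) and the leftover \(\tfrac12(\lambda_2/\lambda)^2\) reproduces the asserted inequality, the strict sign being inherited from Theorem~\ref{inequality_for_independent_case}. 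I do not anticipate a genuine obstacle; the only points deserving care are recognizing the quadratic representation of the \(\chi^2\) distance, the elementary monotonicity of \(m!/\mu^m\) in \(\mu\), and the minor bookkeeping needed to preserve the strict inequality and to handle degenerate indicators.
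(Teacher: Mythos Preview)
Your argument is correct and essentially identical to the paper's: both establish the key comparison \(\chi^2(S_{n,j},\mathcal{P}(\lambda))\le e^{p_j}\chi^2(S_{n,j},\mathcal{P}(\lambda-p_j))+e^{p_j}-1\) from the quadratic form \(\chi^2(X,\mathcal{P}(\mu))+1=\sum_m \mathbb{P}(X=m)^2/(e^{-\mu}\mu^m/m!)\), and then substitute into Theorem~\ref{inequality_for_independent_case}. The only cosmetic difference is that the paper isolates the factor \(((\lambda-p_j)/\lambda)^m\le1\) inside the sum, whereas you phrase the same monotonicity as \(m!/\mu^m\) decreasing in \(\mu\).
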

\begin{proof}
By the definition of \(\chi^2\) distance we can evaluate
\[
\begin{split}
\chi^2(S_{n,j},\mathcal{P}(\lambda))+1&=\sum_{m\ge0}\frac{\bigl(\mathbb{P}(S_{n,j}=m)\bigr)^2}
{e^{-\lambda}\frac{\lambda^m}{m!}}
\\
&= e^{p_j}\sum_{m\ge0}\frac{\bigl(\mathbb{P}(S_{n,j}=m)\bigr)^2}
{e^{-(\lambda-p_j)}\frac{(\lambda-p_j)^m}{m!}}\left(\frac{\lambda-p_j}{\lambda}\right)^m
\\
&\leqslant e^{p_j}\bigl(\chi^2(S_{n,j},\mathcal{P}(\lambda-p_j))+1\bigr)
\end{split}
\]
thus
\[
\chi^2(S_{n,j},\mathcal{P}(\lambda))\leqslant
e^{p_j}\chi^2(S_{n,j},\mathcal{P}(\lambda-p_j))+e^{p_j}-1.
\]
Utilizing the above estimate to evaluate the right hand side of the inequality of Theorem \ref{inequality_for_independent_case} we obtain the estimate of the corollary.
\end{proof}

\begin{lem}
	\label{chi_for_n=1} Suppose \(I\) is an indicator \(p=\mathbb{P}(I=1)=1-\mathbb{P}(I=0)\) then
\[
\chi^2(I,\mathcal{P}(p))= e^p((1-p)^2+p)-1
\]
and as a consequence 
\[
\chi^2(I,\mathcal{P}(p))\leqslant e-1
\]
for all \(0\leqslant p\leqslant 1\).
\end{lem}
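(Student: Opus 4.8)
The plan is to compute $\chi^2(I,\mathcal{P}(p))$ directly from the definition, exploiting the fact that the distribution of $I$ is supported on just two points, and then to establish the uniform bound by a one-variable calculus argument.

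First I would use the elementary identity (the same expansion of the square used in the proof of Corollary \ref{cor_for_chi})
\[
\chi^2(I,\mathcal{P}(p))+1=\sum_{m\ge0}\frac{\bigl(\mathbb{P}(I=m)\bigr)^2}{e^{-p}\frac{p^m}{m!}}.
\]
Since $\mathbb{P}(I=0)=1-p$, $\mathbb{P}(I=1)=p$, and $\mathbb{P}(I=m)=0$ for $m\ge2$, only the terms $m=0$ and $m=1$ survive. The $m=0$ term contributes $(1-p)^2 e^{p}$ and the $m=1$ term contributes $p^2 e^{p}/p = p\,e^{p}$. Adding these gives $\chi^2(I,\mathcal{P}(p))+1 = e^{p}\bigl((1-p)^2+p\bigr)$, which is precisely the claimed closed form.

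For the bound, I would write $(1-p)^2+p = 1-p+p^2$ and set $f(p):=e^{p}(1-p+p^2)-1$, so that $\chi^2(I,\mathcal{P}(p))=f(p)$. Differentiating, $f'(p)=e^{p}(1-p+p^2)+e^{p}(-1+2p)=e^{p}(p+p^2)=e^{p}p(1+p)$, which is nonnegative for $p\ge0$. Hence $f$ is nondecreasing on $[0,1]$, so $f(p)\le f(1)=e\cdot 1-1=e-1$ for all $0\le p\le 1$, completing the proof.

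There is no real obstacle here: the computation of the two-term sum is routine, and the only point requiring a moment's care is the simplification of $f'(p)$, where the $e^{p}(1-p+p^2)$ and $e^{p}(2p-1)$ contributions combine to the manifestly nonnegative $e^{p}p(1+p)$; everything else is immediate. One could alternatively verify the inequality by checking the endpoints and noting convexity, but the monotonicity argument is the cleanest.
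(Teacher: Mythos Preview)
Your proof is correct and follows essentially the same approach as the paper: compute the two nonzero terms of the defining sum to obtain the closed form, then deduce the bound from the monotonicity of $e^{p}\bigl((1-p)^2+p\bigr)$ on $[0,1]$. The only difference is that you spell out the derivative computation explicitly, whereas the paper simply asserts the monotonicity.
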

\begin{proof}
By the definition  of \(\chi^2\) distance
\[
\chi^2(I,\mathcal{P}(p))+1=\sum_{m=0}^\infty\frac{\bigl(\mathbb{P}(I=m)\bigr)^2}
{e^{-p}\frac{p^m}{m!}}=  \frac{(1-p)^2}
{e^{-p}\frac{p^0}{0!}}+\frac{p^2}
{e^{-p}\frac{p}{1!}}=e^p((1-p)^2+p)
\]
hence follows the first identity of the lemma. The second identity follows from the fact that \( e^p((1-p)^2+p)\) is a monotonously increasing function in the interval \((0,+\infty)\).
\end{proof}

\begin{thm}
	\label{T_upper_bound_for_chi_crude}
Let us denote
\[
\Theta:=\max_{1\leqslant j\leqslant n} p_j 
\]
then
\begin{equation}
	\label{chi_upper_bound_by_max}
\chi^2(S_{n},\mathcal{P}(\lambda))
<\frac{\Theta^2}{2}\frac{2e^\Theta-1}{1- \Theta^2e^\Theta}
\end{equation}
if
\[
\Theta^2 e^{\Theta}<1.
\]
\end{thm}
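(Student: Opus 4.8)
The plan is to fix \(\Theta\) with \(\Theta^{2}e^{\Theta}<1\) and prove, by induction on \(n\), the statement that for every finite family \(I_1,\dots,I_n\) of independent indicators all of whose success probabilities lie in \([0,\Theta]\) one has \(\chi^2(S_n,\mathcal{P}(\lambda))<B\), where \(\lambda=\mathbb{E}S_n\) and
\[
B:=\frac{\Theta^{2}}{2}\cdot\frac{2e^{\Theta}-1}{1-\Theta^{2}e^{\Theta}}.
\]
Applied with \(\Theta=\max_j p_j\) this is the theorem; the reformulation with an upper bound \(\Theta\) (rather than the exact maximum) is what makes the induction go through, since \(S_n-I_j\) again has all its probabilities in \([0,\Theta]\). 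Abbreviate \(q:=\Theta^{2}e^{\Theta}\) and \(b:=\tfrac{\Theta^{2}}{2}(2e^{\Theta}-1)=\Theta^{2}\bigl(e^{\Theta}-\tfrac12\bigr)\); the hypothesis says precisely \(q<1\), and it forces \(\Theta>0\), so \(B=b/(1-q)>0\). The backbone of the argument is the recursive estimate of Corollary~\ref{cor_for_chi}, and the reason the particular constant \(B\) appears is the fixed-point identity \(qB+b=B\) (immediate from \(B=b/(1-q)\)).

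For the base case take \(n=0\): the empty sum equals \(0\), so \(\chi^2(S_0,\mathcal{P}(0))=0<B\). (Alternatively one may start from \(n=1\) using the explicit formula of Lemma~\ref{chi_for_n=1}.) For the inductive step, fix \(n\ge 1\) and such a family, with mean \(\lambda\) which we may assume positive, and set \(S_{n,j}=S_n-I_j\). Each \(S_{n,j}\) is again an admissible family --- now of \(n-1\) indicators --- with mean \(\lambda-p_j\), so by the induction hypothesis \(\chi^2(S_{n,j},\mathcal{P}(\lambda-p_j))<B\) for all \(j\). Substituting this into Corollary~\ref{cor_for_chi} and using that all the coefficients there are nonnegative gives
\[
\chi^2(S_n,\mathcal{P}(\lambda))<B\cdot\frac{\lambda_2}{\lambda^{2}}\sum_{j=1}^{n}p_j^{2}e^{p_j}
+\frac{\lambda_2}{\lambda^{2}}\sum_{j=1}^{n}p_j^{2}\bigl(e^{p_j}-1\bigr)
+\frac12\Bigl(\frac{\lambda_2}{\lambda}\Bigr)^{2}.
\]

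It remains to bound the three coefficients by one-line elementary inequalities, using \(p_j\le\Theta\) together with \(\lambda_2=\sum_j p_j^{2}\le\Theta\sum_j p_j=\Theta\lambda\), i.e.\ \(\lambda_2/\lambda\le\Theta\). One finds \(\frac{\lambda_2}{\lambda^{2}}\sum_j p_j^{2}e^{p_j}\le e^{\Theta}(\lambda_2/\lambda)^{2}\le\Theta^{2}e^{\Theta}=q\), while \(\frac{\lambda_2}{\lambda^{2}}\sum_j p_j^{2}(e^{p_j}-1)\le(e^{\Theta}-1)(\lambda_2/\lambda)^{2}\le(e^{\Theta}-1)\Theta^{2}\) and \(\tfrac12(\lambda_2/\lambda)^{2}\le\tfrac12\Theta^{2}\), the latter two adding up to exactly \(b\). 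Hence \(\chi^2(S_n,\mathcal{P}(\lambda))<qB+b=B\), which closes the induction and proves the theorem.

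The individual estimates are trivial, so the only real content is the bookkeeping: one must route the factor \(\lambda_2/\lambda^{2}\) through the two sums so that the multiplicative coefficient collapses to \emph{exactly} \(q=\Theta^{2}e^{\Theta}\) and the inhomogeneous part to \emph{exactly} \(b=\tfrac{\Theta^{2}}{2}(2e^{\Theta}-1)\), so that \(qB+b=B\) closes on the nose and reproduces the stated constant rather than a weaker one; this is where I expect a little care is needed. Two small points worth stating explicitly: the inequality in Corollary~\ref{cor_for_chi} (inherited from Theorem~\ref{inequality_for_independent_case}) is \emph{strict}, so strictness is preserved along the induction; and each \(S_{n,j}\) has finite support, so all the \(\chi^2\) quantities occurring are finite sums and nothing degenerates even when \(\lambda-p_j=0\).
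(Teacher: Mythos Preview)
Your proof is correct and follows essentially the same route as the paper: bound the right-hand side of Corollary~\ref{cor_for_chi} using $\lambda_2/\lambda\le\Theta$ and $p_j\le\Theta$, and iterate. The paper unrolls the recursion explicitly and handles the terminal step with Lemma~\ref{chi_for_n=1} plus the elementary inequality $e^\Theta((1-\Theta)^2+\Theta)-1\le\tfrac{\Theta^2}{2}(2e^\Theta-1)$, whereas your induction with the fixed-point identity $qB+b=B$ and base case $n=0$ sidesteps that extra inequality; this is a cosmetic streamlining rather than a different idea. (One tiny slip: the hypothesis $\Theta^2 e^\Theta<1$ does not by itself force $\Theta>0$; the degenerate case $\Theta=0$ makes the strict inequality fail in both your argument and the paper's statement, but it is harmless to exclude.)
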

\begin{proof}

	Note that evaluating \(p_j^2\leqslant p_j\Theta\) we can obtain the inequality
	\[
	\frac{\lambda_2}{\lambda}\leqslant \Theta.
	\]
Applying the above estimate to the inequality of Corollary \ref{cor_for_chi} we obtain
\[
\chi^2(S_{n},\mathcal{P}(\lambda))
< \Theta^2e^\Theta 
\max_{1\leqslant j\leqslant n}\chi^2(S_{n}-I_j,\mathcal{P}(\lambda-p_j))
+\frac{\Theta^2}{2}(2e^\Theta-1).
\]
Iterating the above inequality one more time we get
\[
\begin{split}
\chi^2(S_{n},\mathcal{P}(\lambda))
&< (\Theta^2e^\Theta )^2
\max_{\substack{1\leqslant j,s\leqslant n\\ j\not=s}}\chi^2(S_{n}-I_j-I_s,\mathcal{P}(\lambda-p_j-p_s))
\\
&\quad+
(\Theta^2e^\Theta )\frac{\Theta^2}{2}(2e^\Theta-1)+\frac{\Theta^2}{2}(2e^\Theta-1)
\end{split}
\]
where \(S_{n,j,s}=S_n-I_j-I_s\). Iterating this inequality further on \(n-1\)-th iteration step we get
\[
\chi^2(S_{n},\mathcal{P}(\lambda))
< (\Theta^2e^\Theta )^{n-1}
\max_{1\leqslant j\leqslant n}\chi^2(I_j,\mathcal{P}(p_j))+
\frac{\Theta^2}{2}(2e^\Theta-1)\sum_{j=0}^{n-2}(\Theta^2e^\Theta )^j.
\]
Evaluating here \(\chi^2(I_j,\mathcal{P}(p_j))\) by means of the estimate provided by Lemma \ref{chi_for_n=1} we arrive at inequality
\[
\chi^2(S_{n},\mathcal{P}(\lambda))
< (\Theta^2e^\Theta )^{n-1}
\bigl(e^\Theta((1-\Theta)^2+\Theta)-1\bigr)+
\frac{\Theta^2}{2}(2e^\Theta-1)\sum_{j=0}^{n-2}(\Theta^2e^\Theta )^j
\]
Since
\[
e^\Theta((1-\Theta)^2+\Theta)-1\leqslant \frac{\Theta^2}{2}(2e^\Theta-1)
\]
for all \(0\leqslant\Theta\leqslant1\) therefore
\[\begin{split}
\chi^2(S_{n},\mathcal{P}(\lambda))&<
\frac{\Theta^2}{2}(2e^\Theta-1)\sum_{j=0}^{n-1}(\Theta^2e^\Theta )^j
\leqslant
\frac{\Theta^2}{2}\frac{2e^\Theta-1}{1- \Theta^2e^\Theta}.
\end{split}
\]
The theorem is proved.
\end{proof}
\begin{cor}

	If \(S_n\) is a sum of independent indicators with the same mean equal to \(p\) then
	\[
	\chi^2(S_{n},\mathcal{P}(\lambda))< \frac{p^2}{2}\frac{2e^p-1}{1- p^2e^p}
	\]
	if \(p^2e^p<1\) and \(\lambda=np\).

\end{cor}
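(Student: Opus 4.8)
The plan is to obtain this as an immediate specialization of Theorem~\ref{T_upper_bound_for_chi_crude}. In the homogeneous case where every indicator $I_j$ has the same success probability $p$, the quantity $\Theta=\max_{1\leqslant j\leqslant n}p_j$ appearing in that theorem collapses to $\Theta=p$, and the hypothesis $\Theta^2e^\Theta<1$ becomes precisely $p^2e^p<1$. Substituting $\Theta=p$ into the conclusion~(\ref{chi_upper_bound_by_max}) then reproduces the asserted bound verbatim, so the statement requires no work beyond what has already been carried out.

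Concretely, first I would note that under the stated assumptions $\lambda=\mathbb{E}S_n=p_1+\cdots+p_n=np$, so the Poisson parameter $\mathcal{P}(\lambda)$ is the one used throughout and the side condition $p^2e^p<1$ is exactly the hypothesis of Theorem~\ref{T_upper_bound_for_chi_crude} in this setting. Then I would simply invoke that theorem with $\Theta=p$. No further iteration of Corollary~\ref{cor_for_chi} or appeal to Lemma~\ref{chi_for_n=1} is needed, since those steps were already executed inside the proof of Theorem~\ref{T_upper_bound_for_chi_crude}.

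There is essentially no obstacle: the only point to verify is the trivial bookkeeping that the maximum $\Theta$ really equals $p$ when all $p_j$ coincide, and that the side condition transfers accordingly. As an optional closing remark (not part of the proof), one may observe that when $p\to 0$ with $\lambda=np$ bounded the right-hand side is asymptotic to $\tfrac12p^2=\tfrac12(\lambda_2/\lambda)^2$, since here $\lambda_2=np^2=\lambda p$, which recovers the leading constant $\tfrac12$ from the sharp asymptotics quoted in the introduction.
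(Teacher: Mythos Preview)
Your proposal is correct and matches the paper exactly: the corollary is stated immediately after Theorem~\ref{T_upper_bound_for_chi_crude} with no separate proof, being the obvious specialization $\Theta=p$. Your bookkeeping observation and the optional asymptotic remark are fine additions but, as you note, not required.
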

\begin{proof}[Proof of Corollary \ref{cor_upper_bound_for_chi}]
	Applying the upper bound (\ref{chi_upper_bound_by_max}) to estimate the \(\chi^2\) distances on the right hand side of the inequality of Corollary \ref{cor_for_chi} we immediately obtain the inequality of the Corollary. 
\end{proof}
\begin{rem}
It is very likely that condition \(\Theta^2 e^{\Theta}<1\) of Corollary \ref{cor_upper_bound_for_chi} could be removed is a more subtle iteration process were used in the proof of Theorem \ref{T_upper_bound_for_chi_crude}.
\end{rem}

\section{Appendix}

\subsection{ Charlier-Parseval Identity and other Properties of Charlier polynomials}

\begin{thm}[Integral form of the Charlier-Parseval identity]
	\label{thm_integral_parseval}
Suppose 
\[
F(z)=\sum_{n=0}^{\infty}a_nz^n
\]
is a generating function of a sequence \(a_0,a_1,\ldots\) then
\[
\sum_{n=0}^{\infty}\frac{|a_n|^2}{e^{-\lambda}\frac{\lambda^n}{n!}}=
\frac{ 1}{2\pi}\int_{0}^{\infty}\left(\int_{-\pi}^{\pi}\Bigl|F\bigl(1+\sqrt{ r/\lambda}e^{it}\bigr)e^{-\lambda\sqrt{r/\lambda}e^{it}}\Bigr|^2e^{- r}\,dt\right)\,dr.
\]
\end{thm}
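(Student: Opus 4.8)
The plan is to recognise the right-hand side, after a change of variables, as a Gaussian-weighted integral of $|F|^{2}$ over the complex plane, and then to exploit the orthogonality of the monomials $z^{n}$ with respect to that weight; equivalently, the right-hand side is $\lambda e^{\lambda}/\pi$ times the squared norm of $F$ in the Bargmann--Fock space attached to the parameter $\lambda$.

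First I would introduce the complex variable $w=\rho e^{it}$ via $r=\rho^{2}$, so that $e^{-r}\,dt\,dr=2\rho e^{-\rho^{2}}\,d\rho\,dt=2e^{-|w|^{2}}\,dA(w)$ in Lebesgue area measure, while $\sqrt{r/\lambda}\,e^{it}=w/\sqrt{\lambda}$ and $\lambda\sqrt{r/\lambda}\,e^{it}=\sqrt{\lambda}\,w$. This turns the double integral into
\[
\frac{1}{\pi}\int_{\mathbb{C}}\bigl|F(1+w/\sqrt{\lambda})\bigr|^{2}\,e^{-2\sqrt{\lambda}\,\Re w}\,e^{-|w|^{2}}\,dA(w).
\]
Next I would substitute $z=1+w/\sqrt{\lambda}$, i.e.\ $w=\sqrt{\lambda}\,(z-1)$ with $dA(w)=\lambda\,dA(z)$, and simplify the exponent $-2\lambda(\Re z-1)-\lambda|z-1|^{2}$: expanding $|z-1|^{2}=|z|^{2}-2\Re z+1$ makes the terms linear in $\Re z$ cancel and leaves $\lambda-\lambda|z|^{2}$. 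Hence the right-hand side equals $\frac{\lambda e^{\lambda}}{\pi}\int_{\mathbb{C}}|F(z)|^{2}e^{-\lambda|z|^{2}}\,dA(z)$.

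It then remains to evaluate this Gaussian integral by expanding $|F(z)|^{2}=\sum_{n,m}a_{n}\overline{a_{m}}\,z^{n}\overline{z}^{\,m}$ and integrating in polar coordinates: the angular integration annihilates every off-diagonal term, and the substitution $s=|z|^{2}$ gives $\int_{\mathbb{C}}|z|^{2n}e^{-\lambda|z|^{2}}\,dA(z)=\pi\,n!/\lambda^{n+1}$. This yields $\frac{\lambda e^{\lambda}}{\pi}\sum_{n}|a_{n}|^{2}\,\frac{\pi\,n!}{\lambda^{n+1}}=e^{\lambda}\sum_{n}\frac{|a_{n}|^{2}\,n!}{\lambda^{n}}$, which is exactly $\sum_{n}|a_{n}|^{2}\big/\bigl(e^{-\lambda}\lambda^{n}/n!\bigr)$, the left-hand side.

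The only delicate point --- and the main obstacle --- is the justification of termwise integration, since the double series for $|F(z)|^{2}$ is not termwise nonnegative and need not converge uniformly on $\mathbb{C}$. I would deal with this by first invoking Parseval's identity on each circle $|z|=\rho$, namely $\frac{1}{2\pi}\int_{-\pi}^{\pi}|F(\rho e^{i\theta})|^{2}\,d\theta=\sum_{n}|a_{n}|^{2}\rho^{2n}$ (legitimate because the appearance of $F(1+\sqrt{r/\lambda}\,e^{it})$ in the statement already forces $F$ to be entire, hence analytic on every closed disc), and only then integrating in $\rho\in(0,\infty)$ against the nonnegative weight $2\pi\rho\,e^{-\lambda\rho^{2}}$, where Tonelli's theorem legitimately interchanges $\sum$ and $\int$. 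This also shows the identity holds in $[0,+\infty]$, both sides being simultaneously finite or infinite. Alternatively one may run the whole computation probabilistically: $\tfrac{1}{2\pi}e^{-r}\,dt\,dr$ is the law of a standard complex Gaussian $Z$ with $\mathbb{E}|Z|^{2}=1$, the factor $e^{-2\sqrt{\lambda}\,\Re w}$ is absorbed by a Cameron--Martin shift $Z\mapsto Z-\sqrt{\lambda}$ that produces the constant $e^{\lambda}$, and one concludes from $\mathbb{E}\bigl[Z^{n}\overline{Z}^{\,m}\bigr]=\delta_{nm}\,n!$.
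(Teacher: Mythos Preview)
Your proof is correct and follows essentially the same route as the paper: both arguments pass through the Gaussian-weighted identity \(\frac{\lambda e^{\lambda}}{\pi}\int_{\mathbb{C}}|F(z)|^{2}e^{-\lambda|z|^{2}}\,dA(z)=e^{\lambda}\sum_{n}|a_{n}|^{2}\,n!/\lambda^{n}\) (via Parseval on circles and a radial Gamma integral) and connect it to the stated double integral by the translation \(z\leftrightarrow 1+w/\sqrt{\lambda}\), the paper merely running the computation from the left-hand side towards the right rather than in your direction. Your explicit invocation of Tonelli and the Bargmann--Fock/Cameron--Martin interpretation are pleasant additions but do not change the substance of the argument.
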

\begin{proof} Parseval identity applied to function \(F(z)\) takes the form
\[
\sum_{n=0}^{\infty}|a_n|^2r^{2n}=\frac{1}{2\pi}\int_{-\pi}^{\pi}|F(re^{it})|^2\,dt.
\]
Replacing here \(r\to \sqrt{r/\lambda}\) and integrating both sides multiplied by \(e^{-r}\) we obtain the identity
\[
\sum_{n=0}^{\infty}|a_n|^2\frac{n!}{\lambda^n}=\frac{1}{2\pi}\int_{0}^{\infty}\left(\int_{-\pi}^{\pi}|F(\sqrt{r/\lambda}e^{it})|^2\,dt\right)e^{-r}\,dr.
\]
Making a change of variables \(r\to \lambda r^2\) here we obtain the identity
\[
\sum_{n=0}^{\infty}|a_n|^2\frac{n!}{\lambda^n}=\frac{\lambda}{\pi }\int_{0}^{\infty}\left(\int_{-\pi}^{\pi}|F(re^{it})|^2\,dt\right)e^{-\lambda r^2}r\,dr.
\]
The double integral on the right side of the above identity can be regarded as an integral in polar coordinates over all complex plane and as a consequence can be expressed as
\[
\sum_{n=0}^{\infty}|a_n|^2\frac{n!}{\lambda^n}=\frac{\lambda}{\pi}\int_{\mathbb{C}}|F(z)|^2e^{-\lambda|z|^2}\,|dz|
\]
here we denote as \(|dz|=dx\,dy\). Making a change of variables \(z=w+1\) in the above integral we get
\[
\sum_{n=0}^{\infty}|a_n|^2\frac{n!}{\lambda^n}=\frac{1}{\pi}\int_{\mathbb{C}}|F(w+1)|^2e^{-\lambda|w+1|^2}\,|dz|=\frac{1}{\pi}\int_{\mathbb{C}}|F(w+1)|^2e^{-\lambda|w|^2-2\lambda\Re w-\lambda}\,|dz|
\]
which can be expressed as
\[
\sum_{n=0}^{\infty}|a_n|^2\frac{n!}{\lambda^n}=\frac{\lambda e^{-\lambda}}{\pi}\int_{\mathbb{C}}|F(w+1)e^{-\lambda w}|^2e^{-\lambda|w|^2}\,|dz|.
\]
Going back to polar coordinates \(w=re^{it}\), making a change of variables \(r=\sqrt{r/\lambda}\)
and multiplying both sides by \(e^\lambda\) we obtain the identity of the Theorem.
\end{proof}
\begin{cor}
	\label{cor_Parseval}
	Suppose 
	\[
	F(z)=\sum_{n=0}^{\infty}a_nz^n
	\] then
\[
\sum_{n=0}^{\infty}\frac{|a_n|^2}{e^{-\lambda}\frac{\lambda^n}{n!}}=
\sum_{n=0}^{\infty}|\alpha_n|^2\frac{n!}{\lambda^n}
\]
where \(\alpha_0,\alpha_1,\ldots\) are the coefficients in the Taylor expansion of the function
\[
F(z+1)e^{-\lambda z}=\sum_{n=0}^{\infty}\alpha_nz^n.
\]
\end{cor}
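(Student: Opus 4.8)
The plan is not to redo any analysis but simply to re-read the right-hand side of Theorem~\ref{thm_integral_parseval} as the integral Parseval formula for a shifted power series. Introduce
\[
G(z):=F(z+1)e^{-\lambda z}=\sum_{n\ge0}\alpha_n z^n,
\]
which is exactly the function whose Taylor coefficients are the $\alpha_n$ in the statement. Then the integrand occurring in Theorem~\ref{thm_integral_parseval} is precisely $\bigl|G\bigl(\sqrt{r/\lambda}\,e^{it}\bigr)\bigr|^2$, so that identity becomes
\[
\sum_{n=0}^{\infty}\frac{|a_n|^2}{e^{-\lambda}\frac{\lambda^n}{n!}}
=\frac{1}{2\pi}\int_{0}^{\infty}\left(\int_{-\pi}^{\pi}\bigl|G\bigl(\sqrt{r/\lambda}\,e^{it}\bigr)\bigr|^2\,dt\right)e^{-r}\,dr.
\]

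The remaining step is to evaluate this double integral by the same elementary computation that already appears at the beginning of the proof of Theorem~\ref{thm_integral_parseval}, now applied to $G$ in place of $F$. First apply the classical Parseval identity to the power series $G$: for $\rho$ in its disk of convergence, $\frac{1}{2\pi}\int_{-\pi}^{\pi}|G(\rho e^{it})|^2\,dt=\sum_{n\ge0}|\alpha_n|^2\rho^{2n}$. Substituting $\rho=\sqrt{r/\lambda}$ turns the inner integral into $\sum_{n\ge0}|\alpha_n|^2 (r/\lambda)^n$, and then integrating term by term against $e^{-r}\,dr$ over $(0,\infty)$, using $\int_0^{\infty}r^n e^{-r}\,dr=n!$, yields $\sum_{n\ge0}|\alpha_n|^2\, n!/\lambda^n$, which is the asserted right-hand side.

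The only point requiring a word of justification is the interchange of the summation over $n$ with the integrations in $r$ (and, if one insists on full rigour, in $t$); since all the terms involved are nonnegative, this is legitimate by Tonelli's theorem, and the resulting identity is valid in $[0,+\infty]$, both sides being finite or infinite together. I expect this to be the main, and essentially the only, obstacle, and it is routine — no idea is needed beyond what is already used inside the proof of Theorem~\ref{thm_integral_parseval}. (Equivalently, one may simply note that the chain of equalities displayed in that proof, read with $G$ substituted for $F$, gives $\sum_{n\ge0}|\alpha_n|^2 n!/\lambda^n$ equal to exactly the double integral above, which closes the argument.)
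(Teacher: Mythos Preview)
Your argument is correct and is essentially identical to the paper's own proof: both recognize that the integrand in Theorem~\ref{thm_integral_parseval} is $|G(\sqrt{r/\lambda}\,e^{it})|^2$ with $G(z)=F(z+1)e^{-\lambda z}$, apply the circle Parseval identity to reduce the inner integral to $\sum_n|\alpha_n|^2(r/\lambda)^n$, and then integrate against $e^{-r}\,dr$ to obtain $\sum_n|\alpha_n|^2\,n!/\lambda^n$. Your explicit appeal to Tonelli for the interchange is a welcome extra bit of care that the paper leaves implicit.
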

\begin{proof} Replacing \(F\bigl(1+\sqrt{ r/\lambda}e^{it}\bigr)e^{-\lambda\sqrt{r/\lambda}e^{it}}\) by 
\[
\sum_{n=0}^{\infty}\alpha_n\bigl(\sqrt{ r/\lambda}e^{it}\bigr)^n
\]
in the main identity of Theorem \ref{thm_integral_parseval} we obtain
\[\begin{split}
\sum_{n=0}^{\infty}\frac{|a_n|^2}{e^{-\lambda}\frac{\lambda^n}{n!}}&=
\frac{ 1}{2\pi}\int_{0}^{\infty}\left(\int_{-\pi}^{\pi}\left|\sum_{n=0}^{\infty}\alpha_n\bigl(\sqrt{ r/\lambda}e^{it}\bigr)^n\right|^2e^{- r}\,dt\right)\,dr
\\
&=
\int_{0}^{\infty}\left(\sum_{n=0}^{\infty}|\alpha_n|^2( r/\lambda)^ne^{- r}\,dt\right)\,dr
\\
&=\sum_{n=0}^{\infty}|\alpha_n|^2\frac{n!}{\lambda^n}.
\end{split}
\]
This completes the proof of the Corollary.
\end{proof}
\begin{cor}
	\label{Cor_orthog_charlier}
	Suppose 
\[
F(z)=\sum_{n=0}^{\infty}a_nz^n\quad \hbox{and}\quad G(z)=\sum_{n=0}^{\infty}a_nz^n
\] then
\[
\sum_{n=0}^{\infty}\frac{a_nb_n}{e^{-\lambda}\frac{\lambda^n}{n!}}=
\sum_{n=0}^{\infty}\alpha_n\beta_n\frac{n!}{\lambda^n}
\]
where \(\alpha_0,\alpha_1,\ldots\) and  \(\beta_0,\beta_1,\ldots\) are the coefficients in the Taylor expansion of the functions
\[
F(z+1)e^{-\lambda z}=\sum_{n=0}^{\infty}\alpha_nz^n\quad\hbox{and}\quad
G(z+1)e^{-\lambda z}=\sum_{n=0}^{\infty}\beta_nz^n.
\]
\end{cor}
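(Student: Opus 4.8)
The plan is to obtain this bilinear identity from the quadratic identity of Corollary~\ref{cor_Parseval} by the standard polarization device. (We read the hypothesis as $G(z)=\sum_{n\ge0}b_nz^n$.) The one structural fact I would record first is that the correspondence $F\mapsto(\alpha_n)_{n\ge0}$ determined by $F(z+1)e^{-\lambda z}=\sum_{n\ge0}\alpha_nz^n$ is linear in $F$: the substitution $z\mapsto z+1$ is affine and multiplication by the fixed series $e^{-\lambda z}$ is linear, so each $\alpha_n$ depends linearly on the $a_m$. Hence $(F+G)(z+1)e^{-\lambda z}$ and $(F-G)(z+1)e^{-\lambda z}$ have Taylor coefficients $\alpha_n+\beta_n$ and $\alpha_n-\beta_n$, while $F+G$ and $F-G$ have power-series coefficients $a_n+b_n$ and $a_n-b_n$.

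Next I would apply Corollary~\ref{cor_Parseval} twice, once with $F+G$ and once with $F-G$ in place of $F$, obtaining
\[
\sum_{n\ge0}\frac{(a_n+b_n)^2}{e^{-\lambda}\frac{\lambda^n}{n!}}
=\sum_{n\ge0}(\alpha_n+\beta_n)^2\frac{n!}{\lambda^n},
\qquad
\sum_{n\ge0}\frac{(a_n-b_n)^2}{e^{-\lambda}\frac{\lambda^n}{n!}}
=\sum_{n\ge0}(\alpha_n-\beta_n)^2\frac{n!}{\lambda^n},
\]
where for real coefficients $|x|^2=x^2$. Subtracting the second identity from the first, applying the elementary identity $(x+y)^2-(x-y)^2=4xy$ term by term on both sides, and dividing by $4$, gives precisely
\[
\sum_{n\ge0}\frac{a_nb_n}{e^{-\lambda}\frac{\lambda^n}{n!}}=\sum_{n\ge0}\alpha_n\beta_n\frac{n!}{\lambda^n},
\]
which is the claim.

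The hard part, such as it is, will be bookkeeping rather than mathematics. First, the hypothesis is implicitly that $\sum_n |a_n|^2/(e^{-\lambda}\lambda^n/n!)$ and $\sum_n|b_n|^2/(e^{-\lambda}\lambda^n/n!)$ are finite; the triangle inequality in this weighted $\ell^2$ space then makes the corresponding sums for $F\pm G$ finite, so Corollary~\ref{cor_Parseval} applies to $F\pm G$ with finite values. Second, since the summands of the four resulting series are nonnegative, the Cauchy--Schwarz inequality gives absolute convergence of $\sum_n a_nb_n/(e^{-\lambda}\lambda^n/n!)$ and of $\sum_n\alpha_n\beta_n n!/\lambda^n$, which legitimizes the termwise subtraction. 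Finally, to cover complex coefficients one writes $F=F_1+iF_2$ and $G=G_1+iG_2$ with real parts; because $e^{-\lambda z}$ has real coefficients, the real and imaginary parts of the $\alpha_n$ are the Taylor coefficients of $F_1(z+1)e^{-\lambda z}$ and $F_2(z+1)e^{-\lambda z}$, and $\mathbb{R}$-bilinearity of both sides in $(F,G)$ reduces the general statement to the four real bilinear pairings already treated. I anticipate no genuine obstruction here: this is a textbook polarization, and the only vigilance required is over convergence and, if complex coefficients are in scope, over the real/imaginary decomposition.
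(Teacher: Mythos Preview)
Your proposal is correct and follows exactly the paper's own approach: the paper likewise applies Corollary~\ref{cor_Parseval} to the sequences $a_n+b_n$ and $a_n-b_n$ and subtracts the resulting identities. Your added remarks on linearity of the map $F\mapsto(\alpha_n)$, on convergence via Cauchy--Schwarz, and on the complex case are welcome bookkeeping but not required by the paper's level of detail.
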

\begin{proof} For the proof we write down the identity of Corollary \ref{cor_Parseval} for sequences \(a_n+b_n\) and \(a_n-b_n\) and substracting one resulting identity from the other we immediately obtain the statement of the present Corollary.
\end{proof}
Let us now explore the case of a sequence
\[
a_n=\begin{cases}
	1&\mbox{if } n=k
	\\
	0&\mbox{if } n\not=k
\end{cases}
\] then \(F(z)=z^k\)
and
\[
F(1+z)e^{-\lambda z}=(1+z)^ke^{-\lambda z}=\sum_{m=0}^{\infty}\alpha_m^{(k)}z^m.
\]
The Corollary \ref{Cor_orthog_charlier} ensures that sequences \(\alpha_n^{(x)}\) and \(\alpha_n^{(y)}\) are orthogonal with respect to the measure that is inverse of Poisson measure, that is
\[
\sum_{n\geqslant 0} \alpha_n^{(x)}\alpha_n^{(y)}\frac{n!}{\lambda^n}
=
\begin{cases}
	0 &\mbox{if } x\not=y
	\\
	\frac{1}{e^{-\lambda}\frac{\lambda^{x}}{
			x!}} &\mbox{if } x=y.
\end{cases}
\]
In order to make  sequences \(\alpha_n^{(x)}\) orthogonal with respect to Poisson measure we multiply them by multiplier \(\frac{\lambda^n}{n!}\) and as a result obtain sequences that are polynomials in \(x\) and are the classical Charlier polynomials
\[
C_n(\lambda,x) =\frac{n!}{\lambda^n}\alpha_n^{(x)}
\]
Thus we conclude that polynomials \(C_k(\lambda,x)\) of degree \(k\) in variable \(x\) defined by relation
\begin{equation}
	\label{def_charlier}
\sum_{k\ge0}C_k(\lambda,x) \frac{\lambda^k}{k!}\,w^k
= (1+w)^xe^{-\lambda w}.
\end{equation}
are orthogonal with respect to Poisson measure
\[
\sum_{n\geqslant 0}\frac{\lambda^n}{n!}
C_n(\lambda,x)C_n(\lambda,y)=
\begin{cases}
	0 &\mbox{if } x\not=y
	\\
	\frac{1}{e^{-\lambda}\frac{\lambda^{x}}{
			x!}} &\mbox{if } x=y.
\end{cases}
\]
\begin{thm}[Charlier-Parseval identity] Let \(X\) be an non-negative integer valued random variable then
\[
\sum_{m\ge0}\frac{\mathbb{P}(X=m)^2}
{e^{-\lambda}\frac{\lambda^m}{m!}}
=\sum_{k\ge0}\frac{\lambda^k}{k!}
\bigl|\mathbb{E}C_k(\lambda,X)\bigr|^2.
\]
\end{thm}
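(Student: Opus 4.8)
The plan is to apply Corollary~\ref{cor_Parseval} to the probability generating function of $X$. Set $F(z)=\sum_{m\ge0}\mathbb{P}(X=m)z^m=\mathbb{E}z^X$, a power series whose coefficients $a_m=\mathbb{P}(X=m)$ are non-negative reals; then the left-hand side of Corollary~\ref{cor_Parseval}, namely $\sum_{m\ge0}|a_m|^2\big/\bigl(e^{-\lambda}\frac{\lambda^m}{m!}\bigr)$, is exactly the left-hand side of the identity we want. So the whole matter reduces to identifying the Taylor coefficients $\alpha_k$ of $F(z+1)e^{-\lambda z}$, after which Corollary~\ref{cor_Parseval} gives, upon substituting $\alpha_k=\frac{\lambda^k}{k!}\mathbb{E}C_k(\lambda,X)$ and simplifying,
\[
\sum_{m\ge0}\frac{\mathbb{P}(X=m)^2}{e^{-\lambda}\frac{\lambda^m}{m!}}
=\sum_{k\ge0}|\alpha_k|^2\frac{k!}{\lambda^k}
=\sum_{k\ge0}\frac{\lambda^k}{k!}\bigl|\mathbb{E}C_k(\lambda,X)\bigr|^2,
\]
which is the assertion.

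To compute the $\alpha_k$ I would start from the generating-function definition~(\ref{def_charlier}) of the Charlier polynomials, specialized with $w=z$ and with the integer-valued random variable $X$ in place of $x$: this reads $(1+z)^X e^{-\lambda z}=\sum_{k\ge0}C_k(\lambda,X)\frac{\lambda^k}{k!}z^k$. Taking expectations turns the left side into $\mathbb{E}\bigl[(1+z)^X\bigr]e^{-\lambda z}=F(z+1)e^{-\lambda z}$ and the right side into $\sum_{k\ge0}\mathbb{E}\bigl[C_k(\lambda,X)\bigr]\frac{\lambda^k}{k!}z^k$, and comparison of Taylor coefficients at $z=0$ yields $\alpha_k=\frac{\lambda^k}{k!}\mathbb{E}C_k(\lambda,X)$, as needed. (Essentially the same proof can be organized without Corollary~\ref{cor_Parseval}, by expanding $\bigl|\mathbb{E}C_k(\lambda,X)\bigr|^2$ as a double sum over the values of $X$ and summing over $k$ using the orthogonality of the $C_k(\lambda,\cdot)$ with respect to the Poisson measure established just above; the two organizations differ only in bookkeeping.)

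The one place where genuine care is required --- and the step I expect to be the sole obstacle --- is the interchange of the expectation with the sum over $k$ used to pass from $(1+z)^X e^{-\lambda z}=\sum_k C_k(\lambda,X)\frac{\lambda^k}{k!}z^k$ to the corresponding identity for $F(z+1)e^{-\lambda z}$. Since the coefficients of $(1+|z|)^m e^{\lambda|z|}$ dominate those of $(1+z)^m e^{-\lambda z}$ term by term, one has $\sum_{k\ge0}\bigl|C_k(\lambda,m)\bigr|\frac{\lambda^k}{k!}|z|^k\le(1+|z|)^m e^{\lambda|z|}$, so the interchange is licensed once $\mathbb{E}\bigl[(1+|z|)^X\bigr]<\infty$; for the intended application $X=S_n$ is bounded and this holds for every $z$, so nothing more need be said. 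For a general non-negative integer-valued $X$ I would instead truncate: apply Corollary~\ref{cor_Parseval} to the polynomial $F_N(z)=\sum_{m\le N}\mathbb{P}(X=m)z^m$, where all sums are finite and the above computation is unconditional, obtaining
\[
\sum_{m\le N}\frac{\mathbb{P}(X=m)^2}{e^{-\lambda}\frac{\lambda^m}{m!}}
=\sum_{k\ge0}\frac{\lambda^k}{k!}\bigl|\mathbb{E}\bigl[C_k(\lambda,X)\mathbf{1}_{\{X\le N\}}\bigr]\bigr|^2 ,
\]
and then let $N\to\infty$, matching the two sides by monotone convergence on the left and a routine limiting argument on the right --- here one may assume that $X$ has finite moments of all orders, since otherwise the left side, hence both sides, is already infinite, while if all moments are finite then $\mathbb{E}[C_k(\lambda,X)\mathbf{1}_{\{X\le N\}}]\to\mathbb{E}C_k(\lambda,X)$ for each fixed $k$ and the passage to the limit is handled by Fatou's lemma. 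With this bookkeeping in place the identity follows.
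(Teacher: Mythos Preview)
Your proof is correct and follows essentially the same route as the paper: apply Corollary~\ref{cor_Parseval} to $F(z)=\mathbb{E}z^X$, identify the Taylor coefficients $\alpha_k$ of $F(z+1)e^{-\lambda z}$ via the generating-function definition~(\ref{def_charlier}) of the Charlier polynomials, and read off the identity. The paper's own argument is in fact briefer, since it does not pause over the interchange of expectation and summation or the passage to unbounded $X$; your added discussion of truncation and Fatou goes beyond what the paper supplies (though note that Fatou alone gives only one inequality, so to close the loop in full generality you would want to invoke the orthogonality argument you mention parenthetically, or argue both inequalities separately).
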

\begin{proof}
We will apply the Corollary \ref{cor_Parseval} to the function \(F(z)=\mathbb{E}z^X\). Then
\[a_n=\mathbb{P}(X=n)\]
and 
\[
F(1+z)e^{-\lambda z}=\mathbb{E}e^{-\lambda z}(1+z)^X=\sum_{k\ge0}\mathbb{E}C_k(\lambda,X) \frac{\lambda^k}{k!}\,z^k
\]
by our definition of Charlier polynomials and as a consequence
\[
\alpha_k=\frac{\lambda^k}{k!}\mathbb{E}C_k(\lambda,X). 
\]
Plugging these representations for \(a_n\) and \(\alpha_n\) into the identity of Corollary \ref{cor_Parseval} we immediately obtain the proof of the theorem.
\end{proof}

By our defining relation (\ref{def_charlier}) of Charlier polynomials 
considered  with \(x=m\) \(x=m+1\) and subtracting the resulting identities one from another we obtain
\[
\begin{split}
\sum_{k\ge0}\bigl(C_k(\lambda,m+1)-C_k(\lambda,m)\bigr) \frac{\lambda^k}{k!}\,w^k
&= (1+w)^{m+1}e^{-\lambda w}-(1+w)^{m}e^{-\lambda w}
\\
&=w(1+w)^{m}e^{-\lambda w}
\\
&=\sum_{k\ge0}C_k(\lambda,m) \frac{\lambda^k}{k!}\,w^{k+1}.
\end{split}
\]
comparing now the coefficients at \(w^k\) we obtain the recurrence relation (\ref{charlier_difference}).

In a similar way
\[
\begin{split}
	\sum_{k\ge0}\left(\frac{m}{\lambda}C_k(\lambda,m-1)-C_k(\lambda,m)\right) \frac{\lambda^k}{k!}\,w^k
	&= \frac{1}{\lambda}\left(m(1+w)^{m-1}e^{-\lambda w}-\lambda(1+w)^{m}e^{-\lambda w}\right)
	\\
	&=\frac{1}{\lambda}\frac{d}{dw}(1+w)^{m}e^{-\lambda w}
	\\
	&=\sum_{k\ge0}C_{k+1}(\lambda,m) \frac{\lambda^k}{k!}\,w^{k}.
\end{split}
\]
once again comparing  the coefficients at \(w^k\) we obtain the identity (\ref{recurrence_for_charlier}).

 \section*{Acknowledgments} A part of this paper was written during the authors several visits to Academia Sinica (Taiwan). The author sincerely thanks Prof. Hsien-Kuei Hwang  for his hospitality during the visits.
\bibliographystyle{apalike}


\begin{thebibliography}{00}
	\bibitem[Barbour and Hall, 1984]{barbour_hall_1984}
	Barbour, A.~D. and Hall, P. (1984).
	\newblock On the rate of {P}oisson convergence.
	\newblock {\em Math. Proc. Cambridge Philos. Soc.}, 95(3):473--480.
	
	\bibitem[Barbour et~al., 1992]{barbour_holst_janson_1992}
	Barbour, A.~D., Holst, L., and Janson, S. (1992).
	\newblock {\em Poisson approximation}, volume~2 of {\em Oxford Studies in
		Probability}.
	\newblock The Clarendon Press, Oxford University Press, New York.
	\newblock Oxford Science Publications.
	
	\bibitem[Bobkov et~al., 2019a]{bobkov_chistyakov_goetze_2019_1}
	Bobkov, S.~G., Chistyakov, G.~P., and G\"{o}tze, F. (2019a).
	\newblock Non-uniform bounds in the {P}oisson approximation with applications
	to informational distances {I}.
	\newblock {\em IEEE Trans. Inform. Theory}, 65(9):5283--5293.
	
	\bibitem[Bobkov et~al., 2019b]{bobkov_chistyakov_goetze_2019_2}
	Bobkov, S.~G., Chistyakov, G.~P., and G\"{o}tze, F. (2019b).
	\newblock Nonuniform bounds in the {P}oisson approximation with applications to
	informational distances. {II}.
	\newblock {\em Lith. Math. J.}, 59(4):469--497.
	
	\bibitem[Borisov and Vorozhe\u{\i}kin, 2008]{borisov_vorozheikin_2008}
	Borisov, I.~S. and Vorozhe\u{\i}kin, I.~S. (2008).
	\newblock Accuracy of approximation in the {P}oisson theorem in terms of
	{$\chi^2$} distance.
	\newblock {\em Sibirsk. Mat. Zh.}, 49(1):8--22.
	
	\bibitem[Johnson, 2004]{johnson_2004}
	Johnson, O. (2004).
	\newblock {\em Information theory and the central limit theorem}.
	\newblock Imperial College Press, London.
	
	\bibitem[Szeg\H{o}, 1975]{szego_1975}
	Szeg\H{o}, G. (1975).
	\newblock {\em Orthogonal polynomials}.
	\newblock American Mathematical Society Colloquium Publications, Vol. XXIII.
	American Mathematical Society, Providence, R.I., fourth edition.
	
	\bibitem[Zacharovas, 2019]{zacharovas_2019}
	Zacharovas, V. (2019).
	\newblock The estimate of {$\chi^2$}-distance between binomial and generalized
	binomial distributions.
	\newblock {\em Teor. Veroyatn. Primen.}, 64(3):552--565.
	
	\bibitem[Zacharovas and Hwang, 2010]{zacharovas_hwang_2010}
	Zacharovas, V. and Hwang, H.-K. (2010).
	\newblock A {C}harlier-{P}arseval approach to {P}oisson approximation and its
	applications.
	\newblock {\em Lith. Math. J.}, 50(1):88--119.
	
\end{thebibliography}
\end{document}